\documentclass[10pt]{amsart}
\usepackage{geometry}                
\geometry{letterpaper}                   
\usepackage{graphicx}
\usepackage{dsfont}
\usepackage{amssymb}
\usepackage{amsmath}
\usepackage{epstopdf}
\usepackage{fullpage}
\usepackage{enumerate}
\usepackage{color,subfig}
\usepackage{amsthm}
\usepackage{placeins}
\usepackage{array}
\usepackage{blkarray}
\usepackage{multirow}
\usepackage{float}
\usepackage{morefloats}
\usepackage{pgfplots}
\input{macros.sty}

\usepackage{placeins}

\usepackage{amsopn}
\DeclareGraphicsRule{.tif}{png}{.png}{`convert #1 `dirname #1`/`basename #1 .tif`.png}

\setcounter{tocdepth}{2}

\let\oldtocsection=\tocsection

\let\oldtocsubsection=\tocsubsection

\let\oldtocsubsubsection=\tocsubsubsection

\renewcommand{\tocsection}[2]{\hspace{0em}\oldtocsection{#1}{#2}\textbf}
\renewcommand{\tocsubsection}[2]{\hspace{1em}\oldtocsubsection{#1}{#2}}
\renewcommand{\tocsubsubsection}[2]{\hspace{2em}\oldtocsubsubsection{#1}{#2}}

\newenvironment{resume}
  {\begin{abstract}}
  {\end{abstract}}

\newcommand{\dv}{\text{\rm div}}
\newcommand{\incle}{\omega_{\sigma,\varepsilon}}
\newcommand{\tr}{\text{\rm tr}}
\newcommand{\e}{\varepsilon}
\newcommand{\I}{\text{\rm I}}

\newcommand{\R}{{\mathbb R}}


\begin{document}
\newtheorem{theorem}{Theorem}[section]
\newtheorem{remark}{Remark}[section]
\newtheorem{definition}{Definition}[section]
\newtheorem{lemma}{Lemma}[section]
\newtheorem{corollary}{Corollary}[section]
\newtheorem{proposition}{Proposition}[section]
\numberwithin{equation}{section}

\title{A connection between topological ligaments in shape optimization and thin tubular inhomogeneities}
\author{
C. Dapogny\textsuperscript{1}
}
\maketitle
\begin{center}
\emph{\textsuperscript{1} Univ. Grenoble Alpes, CNRS, Grenoble INP\footnote{Institute of Engineering Univ. Grenoble Alpes}, LJK, 38000 Grenoble, France},
\end{center}

\rule{\textwidth}{0.4pt}
\begin{abstract}
\noindent In this note, we propose a formal framework
accounting for the sensitivity of a function of the domain with respect 
to the addition of a thin ligament.
To set ideas, we consider the model setting of elastic structures, and we approximate this question by a thin tubular inhomogeneity problem:
we look for the sensitivity of the solution to a partial differential equation posed inside a background medium, and that of a related quantity of interest,
with respect to the inclusion of a thin tube filled with a different material. 
A practical formula for this sensitivity is derived, which lends itself to numerical implementation.
Two applications of this idea in structural optimization are presented.
\end{abstract}
\begin{resume}
\noindent Dans cette note, on introduit une approche formelle visant \`a \'evaluer la sensibilit\'e 
d'une fonction du domaine par rapport \`a la greffe d'un ligament tr\`es fin sur celui-ci. 
Dans le contexte mod\`ele des structures \'elastiques, nous approchons cette question par un probl\`eme de petite inclusion tubulaire : 
on \'etudie la sensibilit\'e de la solution d'une \'equation aux d\'eriv\'ees partielles pos\'ee dans un milieu ambiant, ainsi que celle d'une quantit\'e d'int\'er\^et associ\'ee,
par rapport \`a l'inclusion d\' un tube fin contenant un mat\'eriau distinct de celui du milieu ambiant. 
On obtient une formule explicite pour cette sensibilit\'e, qui se pr\^ete \`a l'impl\'ementation num\'erique. 
Cette id\'ee est illustr\'ee par deux applications en optimisation structurale. 
\end{resume}
\rule{\textwidth}{0.4pt}

\bigskip
\bigskip
\hrule
\tableofcontents
\vspace{-0.5cm}
\hrule
\bigskip
\bigskip

\section{Introduction}\label{sec.intro}

\noindent Most optimal design frameworks rely on a measure of the sensitivity of the objective (and constraint) function 
with respect to `small modifications' of shapes.
One popular method in this direction is that of Hadamard, whereby variations of a shape are understood as perturbations of their boundaries; see e.g. \cite{allaire2004structural,henrot2018shape,pironneau1982optimal,sokolowski1992introduction}.
This information is sometimes combined with topological derivatives, as in \cite{allaire2005structural};
these indicate where internal holes can be beneficially nucleated. 
Conversely, mechanisms to add material to a shape have seldom been investigated. 
In principle, asymptotic expansions similar to those underlying topological derivatives would make it possible to account for the addition of small bubbles of material.
Such floating islands, disconnected from the main structure, are however unefficient and undesirable from the mechanical viewpoint,
and it would actually be more relevant to add \textit{bars}, connecting distant regions of the shape. 

The sensitivity of the solution to a partial differential equation and that of a shape functional 
with respect to the graft of a thin ligament to the considered domain have been studied in 
\cite{nazarov2004topological,nazarov2005topological,nazarov2005self}, under the name of `exterior topological derivative'. 
Unfortunately, the rigorous asymptotic analyses conducted in these works are intricate and difficult to exploit in practice, as the authors themselves acknowledge.

In this note, we propose an alternative, formal approach, which is easier to handle in theory and more amenable to numerical implementation. 
In the model setting of a 2d structure $\Omega$, we use the ersatz material approximation to replace the linear elasticity system on $\Omega$ with a `background' problem 
taking place on a larger hold-all domain $D$, filled with a smooth inhomogeneous material $A_0$.
The addition of a thin ligament to $\Omega$
is then reformulated as the inclusion of a thin tubular inhomogeneity with different material properties $A_1$ from $A_0$. 
The sensitivity of the elastic displacement of the structure and that of a related quantity of interest---the pivotal ingredients of this viewpoint--- can then be calculated by 
borrowing techniques from the literature devoted to low-volume inhomogeneities. 
Such asymptotic problems have indeed been quite extensively investigated; see \cite{capdeboscq2003general}, 
then \cite{beretta2003asymptotic,beretta2009thin} about thin tubular inclusions for the conductivity equation, and \cite{beretta2006asymptotic} in the 2d linearized elasticity case.

This note is preliminary to a longer work \cite{dapogny2020topolig} in preparation, 
where the extension to 3d (the situation being utterly different from that in 2d), as well as multiple other applications 
are discussed, and a general and simple formal method is proposed to calculate the thin tubular inhomogeneity asymptotics.

The remainder of this note is organized as follows. 
The considered setting of linear elastic structures is introduced in \cref{sec.setting}, as 
well as its approximation by a thin tubular inhomogeneity problem. We recall in \cref{sec.ueJe} 
the first-order asymptotic expansion of the solution to the linear elasticity system when the background medium is perturbed by a thin tubular inclusion, 
and we introduce a suitable adjoint method to calculate the first-order correction of a related quantity of interest, which is new to the best of our knowledge.
Two numerical examples illustrating these ideas are eventually presented in \cref{sec.num}.

\section{Presentation of the structural optimization problem and relation with thin tubular inhomogeneities}\label{sec.setting}

\subsection{Optimization of the shape of a 2d elastic structure}\label{sec.elasexact}

\noindent In the 2d linear elasticity setting, shapes 
are bounded, Lipschitz domains $\Omega \subset \R^2$ whose boundary $\partial \Omega= \Gamma_D \cup \Gamma_N \cup \Gamma$ is divided into three disjoint parts:
$\Omega$ is clamped on $\Gamma_D$, traction loads $g \in L^2(\Gamma_N)^2$ are applied on $\Gamma_N$, 
and the traction-free region $\Gamma$ is the only one which is subject to optimization. Assuming body forces $f \in L^2(\R^2)^2$, 
the displacement $u_\Omega: \Omega \to \R^2$ is the unique solution in the space $H^1_{\Gamma_D}(\Omega)^2 := \left\{Êu \in H^1(\Omega)^2, \:\: u = 0 \text{ on } \Gamma_D \right\}$ to the system: 
\begin{equation}\label{eq.linelas}
 \left\{ 
\begin{array}{cl}
-\dv(Ae(u_\Omega)) = f & \text{in } \Omega, \\
u_\Omega = 0 & \text{on } \Gamma_D, \\
Ae(u_\Omega)n = g &\text{on } \Gamma_N, \\
Ae(u_\Omega) = 0 & \text{on } \Gamma,
\end{array}
\right.
\end{equation}
where $e(u) := \frac{1}{2}(\nabla u + \nabla u^T)$ is the strain tensor, 
and $A$ is the Hooke's law of the constituent material: 
\begin{equation}\label{eq.hooke}
\text{for any symmetric } 2 \times 2 \text{ matrix } e,\:\: Ae = 2\mu e + \lambda \tr(e)  \I,
 \end{equation}
involving the Lam\'e coefficients $\lambda,\mu$ of the material. 
The performance of $\Omega$ is measured in terms of a function $J(\Omega)$ of the domain, say for simplicity: 
\begin{equation}\label{eq.JOm}
 J(\Omega) = \int_\Omega{j(u_\Omega) \: dx},
 \end{equation}
where $j: \R^2 \to \R$ is a smooth function satisfying adequate growth conditions. 

We consider the variation $\Omega_{\sigma,\e}$ where a ligament $\omega_{\sigma,\e}$ with thickness $\e \ll 1$ is grafted to $\Omega$: 
$$ \Omega_{\sigma,\e}= \Omega \cup \omega_{\sigma, \e}, \text{ where } \omega_{\sigma,\e} := \left\{Êx \in \R^2, \:\: \text{\rm dist}(x,\sigma) < \e \right\},$$
and $\sigma$ is a smooth, non self-intersecting curve in $\R^2$ whose endpoints belong to $\partial \Omega$; see \cref{fig.incl2set} (left). 
Assuming homogeneous Neumann boundary conditions on $\partial \omega_{\sigma,\e}$ in the defining system for $u_{\Omega_{\sigma,\e}}$ (the version of \cref{eq.linelas} posed on $\Omega_{\sigma,\e}$), we look for an expansion of the form:
\begin{equation}\label{eq.ligasym}
 J(\Omega_{\sigma,\e}) = J(\Omega) + \e dJ_L(\Omega)(\sigma) + o(\e),
 \end{equation}
where it is tempting to call the first-order term $dJ_L(\Omega)(\sigma)$ the `ligament derivative' of $J(\Omega)$.

\subsection{The thin tubular inhomogeneity problem}

\noindent We replace \cref{eq.linelas} with the following equation, taking place in a fixed `hold-all' domain $D$:
\begin{equation}\label{eq.elasbg}
 \left\{ 
\begin{array}{cl}
-\dv(A_0e(u_0)) = f & \text{in } D, \\
u_0 = 0 & \text{on } \Gamma_D, \\
A_0e(u_0)n = g &\text{on } \Gamma_N, \\
A_0e(u_0) = 0 & \text{on } \Gamma,
\end{array}
\right.
\end{equation}
where $A_0$ is a smooth Hooke's tensor of the form \cref{eq.hooke} with inhomogeneous coefficients $\lambda_0(x),\mu_0(x)$. 
This problem is an approximation of that in \cref{sec.elasexact} 
if $A_0$ is defined as a smooth transition between the Hooke's tensor $A$ inside $\Omega$ 
and that of a very soft material $\eta A$, $\eta \ll 1$, inside the void $D\setminus \Omega$ (this is the classical ersatz material method; see \cite{allaire2004structural}); see \cref{fig.incl2set} (right).

The perturbed version of \cref{eq.elasbg} where a tube $\omega_{\sigma,\e}$ filled by another material $A_1$ with Lam\'e parameters $\lambda_1(x)$, $\mu_1(x)$ is included in $D$ is:
\begin{equation}\label{eq.inhpert}
 \left\{ 
\begin{array}{cl}
-\dv(A_\e e(u_\e)) = f & \text{in } D, \\
u_\e = 0 & \text{on } \Gamma_D, \\
A_\e e(u_\e)n = g &\text{on } \Gamma_N, \\
A_\e e(u_\e) = 0 & \text{on } \Gamma,
\end{array}
\right. 
\text{ with } A_\e (x) = \left\{ 
\begin{array}{cl}
A_1(x) & \text{if } x \in \incle, \\
A_0(x) & \text{otherwise}.
\end{array}
\right.
\end{equation}
The approximate counterparts $J_\sigma(0)$ and $J_\sigma(\e)$ of the functionals $J(\Omega)$ and $J(\Omega_{\sigma,\e})$ in \cref{eq.JOm} read:  
\begin{equation}\label{eq.Jsigmae}
J_\sigma(0) = \int_D{j(u_0)\:dx}, \:\: J_\sigma(\e) = \int_D{j(u_\e) \: dx},
\end{equation}
and we approximate $dJ_L(\Omega)(\sigma)$ in \cref{eq.ligasym} by the first-order term $J_\sigma^\prime(0)$ in the expansion:
$$ J_\sigma(\e) =J_\sigma(0) + \e J_\sigma^\prime(0) + o(\e). $$

\begin{figure}[!ht]
\centering
\includegraphics[width=1.0\textwidth]{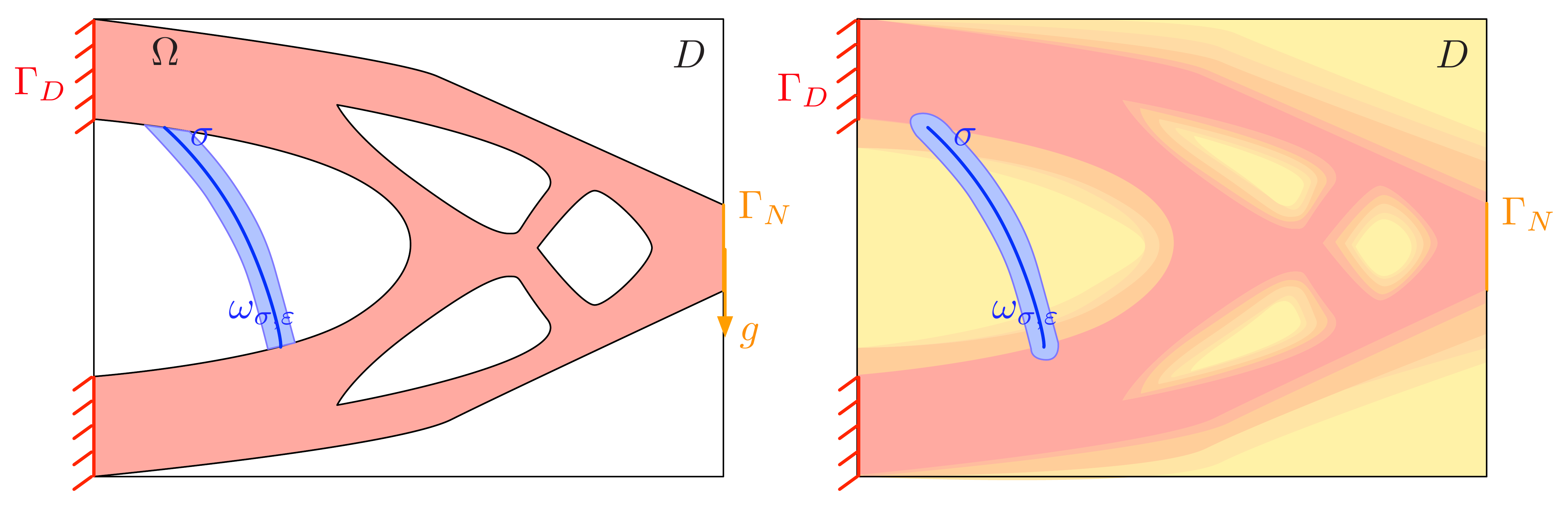}
\caption{\it (Left) Graft of a ligament with base curve $\sigma$ to an elastic structure $\Omega$; 
(right) corresponding thin tubular inclusion inside an approximating background medium occupying the larger domain $D$.}
\label{fig.incl2set}
\end{figure}

\section{An adjoint method for the topological ligament}\label{sec.ueJe}

\noindent In this section, we discuss asymptotic formulas for $u_\e$ and $J_\sigma(\e)$ in \cref{eq.inhpert,eq.Jsigmae} as $\e \to 0$. 

\subsection{Asymptotic formula for the state $u_\e$}

\noindent
Let the $2\times 2$ matrix field $N(x,y) = \left\{ N_{ij}(x,y)\right\}_{i=1,2 \atop j=1,2}$ be the fundamental solution of the system \cref{eq.elasbg}.
More precisely, for any $x \in D$, the $j^{\text{\rm th}}$ column vector $y \mapsto N_j(x,y) = \left\{N_{ij}(x,y) \right\}_{i=1,2}$ satisfies: 
$$\left\{ 
\begin{array}{cl}
 \dv_y(A_0e_y(N_j(x,y))) = \delta_{y=x} e_j & \text{in } D, \\
 A_0e_y(N_j(x,y))n(y) = 0 & \text{on }Ê\Gamma_N \cup \Gamma, \\
 N_j(x,y) = 0 & \text{on } \Gamma_D.
 \end{array}
 \right.
 $$
where $e_j$ is the $j^{\text{\rm th}}$ vector in the canonical basis of $\R^2$. The main result of interest is the following:

\begin{theorem}\label{th.asymue}
Let $x \in \overline{D} \setminus \sigma$; the solution $u_\e$ to the perturbed system \cref{eq.inhpert} fulfills the following expansion:
\begin{equation}\label{eq.asymue}
 \frac{1}{\e}(u_\e - u_0)(x) = u_1 + o(1), \text{ where } u_1(x) = \int_{\sigma}{{\mathcal M}(y) e(u_0)(y) : e_y(N(x,y)) \:d \ell(y)},
 \end{equation}
and the remainder $o(1)$ is uniform when $x$ is confined to a fixed compact subset $K \subset  \overline{D} \setminus \sigma$. 
The polarization tensor ${\mathcal M}(y)$ reads, for any symmetric $2\times 2$ matrix $e$: 
\begin{equation}\label{eq.M}
 {\mathcal M}(y) e = \alpha(y) \tr(e) \I + \beta(y) e + \gamma(y) (e \tau \cdot \tau ) \tau \otimes \tau + \rho(y) (e n \cdot n) n \otimes n,
 \end{equation}
involving the (inhomogeneous) coefficients: 
$$ \alpha = 2(\lambda_1-\lambda_0) \frac{\lambda_0 + 2\mu_0}{\lambda_1 + 2\mu_1}, \:\: \beta = 4(\mu_1- \mu_0) \frac{\mu_0}{\mu_1}, $$
and 
$$\gamma = 4(\mu_1-\mu_0) \left( \frac{2\lambda_1 + 2\mu_1 - \lambda_0}{\lambda_1 + 2\mu_1} -\frac{\mu_0}{\mu_1}\right), \:\: \rho = 4(\mu_1 - \mu_0) \frac{\mu_1\lambda_0- \mu_0 \lambda_1}{\mu(\lambda_1 + 2\mu_1)} .$$
\end{theorem}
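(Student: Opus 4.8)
The plan is to run the now-standard argument for thin-inhomogeneity asymptotics, specialized to the $2$d elasticity transmission problem \eqref{eq.inhpert}: a Green's-function representation of $u_\e - u_0$ reduces everything to the behavior of $e(u_\e)$ inside the tube, which is in turn governed by a ``straight strip'' cell problem whose explicit solution yields the polarization tensor \eqref{eq.M}.

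\emph{Step 1: difference equation and crude bound.} Writing $w_\e := u_\e - u_0$ and subtracting the weak formulations of \eqref{eq.inhpert} and \eqref{eq.elasbg}, $w_\e \in H^1_{\Gamma_D}(D)^2$ solves
$$ \int_D A_0\, e(w_\e) : e(v)\, dx \;=\; -\int_{\incle} (A_1 - A_0)\, e(u_\e) : e(v)\, dx \qquad \text{for all } v \in H^1_{\Gamma_D}(D)^2, $$
that is $-\dv(A_0 e(w_\e)) = \dv\!\big(\chi_{\incle}(A_1-A_0)e(u_\e)\big)$ with the homogeneous versions of the boundary conditions of \eqref{eq.elasbg}. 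Since $|\incle| = O(\e)$, Korn's inequality together with Cauchy--Schwarz yields the crude bound $\lVert w_\e\rVert_{H^1(D)} = O(\e^{1/2})$.

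\emph{Step 2: representation formula and reduction to a cell problem.} Testing the identity above with (a regularization of) the columns $N_j(x,\cdot)$ of the fundamental solution, and using the sign convention $\dv_y(A_0 e_y(N_j(x,y))) = \delta_{y=x}e_j$, one gets the exact representation
$$ w_\e(x) \;=\; \int_{\incle} (A_1 - A_0)\, e(u_\e)(y) : e_y(N(x,y))\, dy, \qquad x \in \overline D\setminus\sigma. $$
For $x$ in a fixed compact $K \subset \overline D\setminus\sigma$, the field $y\mapsto N(x,y)$ and its $y$-derivatives are smooth and uniformly bounded in a neighborhood of $\sigma$, so everything comes down to the behavior of $e(u_\e)$ inside the thin tube. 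Writing points of $\incle$ as $y = z + \e t\, n(z)$ with $z\in\sigma$ and $t\in(-1,1)$, and rescaling by $\e$ about a point $z\in\sigma$, the tube blows up into a straight infinite strip, and $e(u_\e)(y)$ is expected to converge to the constant interior strain $\mathcal E(z)\,e(u_0)(z)$ of the following transmission problem in $\R^2$: a straight strip of Hooke's law $A_1(z)$, embedded in an infinite medium of Hooke's law $A_0(z)$, and subjected at infinity to the uniform strain $e(u_0)(z)$; here $\mathcal E(z)$ is the linear ``strain-concentration'' map sending the far-field strain to the interior strain. Setting $\mathcal M(z) := 2\,(A_1(z)-A_0(z))\,\mathcal E(z)$ --- the factor $2$ absorbing the cross-sectional width $2\e$ of $\incle$ --- and substituting $e(u_\e)(y)\simeq\mathcal E(z)e(u_0)(z)$ into the representation, freezing $e_y(N(x,\cdot))$ at $z$ (licit because $x\notin\sigma$) and integrating out $t$, one recovers \eqref{eq.asymue}. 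That the curvature of $\sigma$, the $z$-dependence of $u_0,A_0,A_1$, and the two endpoints of $\sigma$ on $\partial\Omega$ contribute only $o(\e)$, uniformly for $x\in K$, is exactly what the rigorous thin-inclusion analyses of \cite{beretta2003asymptotic,beretta2006asymptotic,beretta2009thin} supply, and I would invoke those results rather than reprove them.

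\emph{Step 3: solving the straight strip cell problem explicitly.} Fix a point $z\in\sigma$, suppress the $z$-dependence, and let $(\tau,n)$ be the unit tangent and normal there. By translation invariance along $\tau$, one may look for the displacement in the form $u(y) = e\, y + \varphi(t)$ with $\varphi$ depending only on $t := y\cdot n$; boundedness of $u - e\,y$ forces $\varphi' \equiv 0$ outside the strip, so the exterior strain equals $e$ and $e(u) = e + \tfrac12(\varphi'\otimes n + n\otimes\varphi')$ has the same $\tau\otimes\tau$-component everywhere, i.e.\ $e^1\tau\cdot\tau = e\tau\cdot\tau$. Inside the strip the equilibrium equation reduces to $A_1 e^1 n = \text{const}$, and continuity of the traction $A e(u)n$ across the two faces then gives $A_1 e^1 n = A_0 e\, n$. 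Expanding $A_i e = 2\mu_i e + \lambda_i\tr(e)\,\I$, these three scalar relations form an elementary (in fact triangular) linear system for the components $e^1\tau\cdot\tau$, $e^1\tau\cdot n$, $e^1 n\cdot n$ of the interior strain $e^1$ in the frame $(\tau,n)$; solving it expresses $e^1$ linearly in $e$, and $\mathcal M e = 2(A_1-A_0)e^1$ then regroups into the decomposition \eqref{eq.M}, with $\alpha,\beta,\gamma,\rho$ read off the coefficients of $\tr(e)\,\I$, $e$, $(e\tau\cdot\tau)\,\tau\otimes\tau$ and $(en\cdot n)\,n\otimes n$ respectively; as a consistency check, $\mathcal M\equiv 0$ when $A_1 = A_0$.

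\emph{Where the difficulty lies.} The single genuinely delicate point is the uniform-in-$x$ remainder estimate of Step 2: it rests on a matched inner/outer asymptotic construction near $\sigma$, on a separate analysis of the boundary layers at the two endpoints of $\sigma$ (where the strip model degenerates), and on corrector bounds that are stable with respect to the inhomogeneity of $A_0$ and $A_1$ --- precisely the technical apparatus developed in \cite{beretta2006asymptotic} for $2$d elasticity. Step 3, by contrast, is a finite and purely algebraic computation, whose only real hazard is the bookkeeping of the Lamé constants and of the cross-sectional factor.
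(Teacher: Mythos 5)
The paper does not actually prove \cref{th.asymue}: it defers the rigorous argument entirely to \cite{beretta2006asymptotic} and merely remarks that the formula can also be recovered by a formal energy-minimization method in the spirit of \cite{nguyen2009representation,dapogny2017uniform}. Your sketch is therefore not comparable to a proof in the paper, but it is the standard and correct route, and it is consistent with the reference you (and the paper) invoke for the hard analytic content. I checked the one part of your argument that can be verified independently, namely the strip cell problem of Step 3: the interface relations $e^1\tau\cdot\tau=e\tau\cdot\tau$, $2\mu_1 e^1\tau\cdot n=2\mu_0\, e\tau\cdot n$ and $(\lambda_1+2\mu_1)\,e^1n\cdot n=(\lambda_0+2\mu_0)\,e\,n\cdot n+(\lambda_0-\lambda_1)\,e\tau\cdot\tau$, together with ${\mathcal M}e=2(A_1-A_0)e^1$ (the factor $2$ correctly accounting for the cross-sectional width $2\e$ of $\incle$), do reproduce the coefficients $\alpha,\beta,\gamma,\rho$ of \cref{eq.M} exactly, provided the $\mu$ in the denominator of $\rho$ is read as $\mu_1$ (an apparent typo in the statement). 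Your Step 2 sign bookkeeping is also coherent with the paper's unusual convention $\dv_y(A_0e_y(N_j(x,y)))=\delta_{y=x}e_j$. The only substantive caveat is the one you already flag yourself: the passage from the exact representation of $u_\e-u_0$ to the limiting line integral, with a remainder uniform on compact subsets of $\overline{D}\setminus\sigma$, is precisely the difficult content of \cite{beretta2006asymptotic} (convergence of $e(u_\e)$ inside the tube to the cell-problem strain, endpoint layers, curvature and inhomogeneity corrections), and your proposal correctly treats it as cited rather than reproved. In short: a correct sketch, taking the transmission-condition route to the polarization tensor where the paper alludes instead to an energy-minimization derivation; both are formal shortcuts to the same tensor, with the rigorous work outsourced identically.
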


The rigorous (difficult) proof of this result is given in \cite{beretta2006asymptotic}. 
Interestingly, this formula can also be obtained thanks to a formal method based on energy minimization, 
close to that used in \cite{nguyen2009representation,dapogny2017uniform}.

\subsection{An adjoint state method for the derivative of an observable}

\noindent We are now in position to derive the behavior of the functional $J_\sigma(\e)$ in \cref{eq.Jsigmae} as $\e\to 0$.

\begin{theorem}\label{th.asymje}
The following expansion holds:
\begin{equation}\label{eq.Jsigmap}
 J_\sigma(\e ) = J_\sigma(0) + \e J_\sigma^\prime(0) + o(1), \text{ where }ÊJ^\prime_\sigma(0) := \int_\sigma{{\mathcal M}(y) e(u_0):e(p_0) \:d \ell(y)},
\end{equation}
where ${\mathcal M}(y)$ is the polarization tensor in \cref{eq.M}, and the adjoint state $p_0 \in H^1_{\Gamma_D}(\Omega)^2$ is the solution to:
\begin{equation}\label{eq.adj} 
\left\{Ê
\begin{array}{cl}
-\dv (A_0 e(p_0)) = -j^\prime(u_0) & \text{in } D, \\
p_0 = 0 & \text{on } \Gamma_D, \\
Ae(p_0) n = 0 & \text{on } \Gamma_N \cup \Gamma.
\end{array}
\right.
\end{equation}
\end{theorem}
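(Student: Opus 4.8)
The plan is to derive \cref{eq.Jsigmap} by combining a first-order Taylor expansion of $j$ with an \emph{exact} duality identity that trades the global quantity $\int_D j'(u_0)\cdot(u_\e - u_0)\,dx$ for an integral over the thin tube $\incle$ alone, and then to invoke the thin-tube asymptotics already underlying \cref{th.asymue}. First I would write $J_\sigma(\e) - J_\sigma(0) = \int_D j'(u_0)\cdot(u_\e - u_0)\,dx + \int_D O\big(|u_\e - u_0|^2\big)\,dx$. The quadratic remainder is $o(\e)$ thanks to the a priori estimates classically available for low-volume inhomogeneity problems (see \cite{beretta2006asymptotic,capdeboscq2003general}): testing the difference of the weak forms of \cref{eq.inhpert,eq.elasbg} against $u_\e - u_0 \in H^1_{\Gamma_D}(D)^2$ yields an energy bound of the form $\|e(u_\e - u_0)\|_{L^2(D)} = O(\e^{1/2})$, while \cref{th.asymue} gives $|u_\e - u_0| = O(\e)$ away from $\sigma$; since in 2d the displacements $u_\e$, $u_0$ are (locally) continuous and uniformly close, $u_\e - u_0$ is also $O(\e)$ pointwise inside the $O(\e)$-wide layer around $\sigma$, and splitting $D$ accordingly gives $\|u_\e - u_0\|_{L^2(D)} = o(\e^{1/2})$, whence the remainder is $o(\e)$.

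The core step is the adjoint identity. Subtracting the weak form of \cref{eq.elasbg} from that of \cref{eq.inhpert} gives $\int_D A_0\, e(u_\e - u_0):e(v)\,dx = -\int_{\incle} (A_1 - A_0)\, e(u_\e):e(v)\,dx$ for every $v \in H^1_{\Gamma_D}(D)^2$. Choosing $v = p_0$, using the symmetry of $A_0$, and then using the weak form of the adjoint system \cref{eq.adj} with test function $w = u_\e - u_0$ (legitimate since $\incle$ lies at positive distance from $\partial D$ for small $\e$, so no boundary terms survive), one obtains the \emph{exact} identity $\int_D j'(u_0)\cdot(u_\e - u_0)\,dx = \int_{\incle} (A_1 - A_0)\, e(u_\e):e(p_0)\,dx$. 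Combined with the Taylor step, this yields $J_\sigma(\e) - J_\sigma(0) = \int_{\incle} (A_1 - A_0)\, e(u_\e):e(p_0)\,dx + o(\e)$, so that everything is reduced to the behaviour of $e(u_\e)$ inside the vanishing tube, tested against $e(p_0)$ — which is smooth in a fixed neighbourhood of $\sigma$ by interior elliptic regularity, since $\sigma$ lies where $A_0$ is smooth and $j'(u_0) \in L^2(D)$, so $p_0 \in H^2_{\mathrm{loc}}$ there.

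It then remains to pass to the limit: for any test field $\varphi$ that is, say, $H^2$ near $\sigma$, one has $\frac1\e \int_{\incle} (A_1 - A_0)\, e(u_\e):e(\varphi)\,dx \to \int_\sigma {\mathcal M}(y)\, e(u_0)(y):e(\varphi)(y)\,d\ell(y)$ as $\e \to 0$, with ${\mathcal M}$ the polarization tensor of \cref{eq.M}. This is precisely the ingredient behind \cref{th.asymue}: testing the state-difference identity of the previous step against $N_j(x,\cdot)$ gives the representation $(u_\e - u_0)_j(x) = \int_{\incle} (A_1 - A_0)\, e(u_\e):e_y(N_j(x,y))\,dy$, and since $x \notin \sigma$ the field $N_j(x,\cdot)$ is smooth near $\sigma$, so the case $\varphi = N_j(x,\cdot)$ is exactly the expansion \cref{eq.asymue}. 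The corrector analysis yielding this limit sees only the trace of the (arbitrary) smooth test field on $\sigma$, hence applies verbatim to $\varphi = p_0$, giving $J^\prime_\sigma(0) = \int_\sigma {\mathcal M}(y)\, e(u_0):e(p_0)\,d\ell(y)$.

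The genuinely hard point is this last passage to the limit: $e(u_\e)$ restricted to $\incle$ does \emph{not} converge to $e(u_0)$, but relaxes to a cross-sectional boundary-layer profile whose effective coupling with $e(u_0)$ is encoded by ${\mathcal M}$ — the $\tau\otimes\tau$ and $n\otimes n$ contributions reflecting the continuity of the tangential strain and of the normal stress across $\partial\incle$. Extracting this is the delicate asymptotic analysis of \cite{beretta2006asymptotic}, which I would borrow rather than reprove. Alternatively, and more in the formal spirit of this note, one reaches \cref{eq.Jsigmap} directly by substituting $u_1$ from \cref{th.asymue} into $\int_D j'(u_0)\cdot(u_\e - u_0)\,dx \simeq \e \int_D j'(u_0)\cdot u_1\,dx$, exchanging the order of integration over $D$ and over $\sigma$, and recognizing the resulting integral of $j'(u_0)$ against the fundamental solution $N$ as the adjoint state $p_0$ (the elasticity operator in \cref{eq.elasbg} being self-adjoint, $N_{ij}(x,y) = N_{ji}(y,x)$).
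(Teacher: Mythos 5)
Your proposal is correct, and its main line is genuinely different from the paper's sketch (although your closing paragraph describes the paper's argument almost verbatim). The paper proceeds ``forward'': it first establishes, via a variant of the Aubin--Nitsche trick, that $J_\sigma'(0)=\int_D j'(u_0)\cdot u_1\,dx$ with $u_1$ the first-order term of \cref{th.asymue}, then substitutes the integral expression \cref{eq.asymue} for $u_1$, exchanges the integrals over $D$ and $\sigma$, and identifies $\int_D j'(u_0(x))N(x,y)\,dx$ with $p_0(y)$ through the representation formula for the adjoint state. Your main route instead works ``backward'': the exact variational identity $\int_D j'(u_0)\cdot(u_\e-u_0)\,dx=\int_{\incle}(A_1-A_0)e(u_\e):e(p_0)\,dx$ localizes everything onto the shrinking tube tested against the fixed smooth field $e(p_0)$, and the conclusion follows from the weak-$*$ convergence of $\frac1\e(A_1-A_0)e(u_\e)\,\mathds{1}_{\incle}\,dx$ against smooth test strains --- the very ingredient that yields \cref{th.asymue} when the test field is $N_j(x,\cdot)$. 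This buys something real: it bypasses the delicate issue that the expansion \cref{eq.asymue} is uniform only on compact sets away from $\sigma$, whereas the paper's formula $\int_D j'(u_0)\cdot u_1\,dx$ integrates over points arbitrarily close to $\sigma$ (which is precisely why the paper must invoke Aubin--Nitsche), and it avoids the fundamental solution altogether. One sub-step of yours is shaky, though not fatally: the claim that $u_\e-u_0=O(\e)$ pointwise \emph{inside} the $O(\e)$-layer does not follow from \cref{th.asymue} and is not obvious (uniform-in-$\e$ H\"older bounds for systems with discontinuous coefficients require Meyers-type estimates); the clean way to get $\int_D|u_\e-u_0|^2\,dx=o(\e)$ is the same duality device you already use, testing the state-difference identity against an auxiliary adjoint problem with $L^2$ data and exploiting its $W^{1,p}$, $p>2$, regularity near $\sigma$ to beat the crude $O(\e^{1/2})$ energy bound. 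With that repair, both routes deliver \cref{eq.Jsigmap}.
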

\begin{proof}[Sketch of proof:]
A variant of the Aubin-Nitsche trick (see e.g. \cite{ciarlet2002finite}) allows to show that the limit $J_\sigma^\prime(0) = \lim_{\e \to 0}{\frac{J_\sigma(\e) - J_\sigma(0)}{\e}}$ exists
and has the expression: 
$$ J_\sigma^\prime(0) = \int_D{j^\prime(u_0) u_1 \:dx},$$
where $u_1$ is the first-order term in \cref{eq.asymue}. 
Now introducing the adjoint state \cref{eq.adj}, we obtain: 
$$ \begin{array}{>{\displaystyle}cc>{\displaystyle}l}
J_\sigma^\prime(0) &=& \int_D{j^\prime(u_0(x)) \left(\int_\sigma{{\mathcal M}(y) e(u_0)(y) : e_y(N(x,y)) \:d \ell(y)} \right)\: d x}.\\
&=& \int_\sigma{{\mathcal M}(y) e(u_0)(y):e_y\left( \int_D{j^\prime(u_0(x)) N(x,y) \:d x}\right) \: d \ell(y)} \\
&=& \int_\sigma{{\mathcal M}(y) e(u_0)(y):e(p_0)(y) \:d \ell(y)}, 
\end{array}
$$
where we have used the integral representation formula: 
$$ p_0(y) = \int_D{j^\prime(u_0(x))N(x,y) \: dx}.$$
\end{proof}

\subsection{Practical interest of the result}\label{sec.practinterest}

\noindent We return to our purpose of finding a curve $\sigma$ such that the variation $\Omega_{\sigma,\e}$ of a given shape $\Omega$ 
achieves a lower value $J(\Omega_{\sigma,\e}) < J(\Omega)$. 
According to the discussion of \cref{sec.setting}, 
we consider the background medium $A_0$ in $D$ obtained from $\Omega$ via the ersatz material approximation, 
and we search for $\sigma$ such that $J_\sigma^\prime(0)<0$. 

Using \cref{th.asymje}, and assuming for simplicity that $\sigma$ is a line segment with (constant) tangent vector $\tau = (\tau_1,\tau_2) \in \R^2$, 
formula \cref{eq.Jsigmap} can be rewritten:
$$ J_\sigma^\prime(0) = \int_\sigma{P(y,\tau_1,\tau_2) \:d\ell (y)},$$
where for a given point $y \in \sigma$, $(\tau_1,\tau_2) \mapsto P(y,\tau_1,\tau_2)$ is a bivariate $4^{\text{\rm th}}$-order homogeneous polynomial, 
whose coefficients depend explicitly on $y$ via the entries of $e(u_0)(y)$, $e(p_0)(y)$. 

The search for an `optimal' line segment $\sigma$ such that $J_\sigma^\prime(0)<0$ is then achieved along the following lines: 
\begin{enumerate}
\item Solve \cref{eq.elasbg} and \cref{eq.adj} (e.g. by the finite element method) for $u_0$ and $p_0$, respectively. 
\item Calculate the coefficients of the polynomial $P(y,\cdot,\cdot)$ using the formulas in \cref{th.asymue,th.asymje}; 
\item For all points $z_1, z_2$ in (a discretization of) $\partial \Omega$, calculate $J_{\sigma}^\prime(0)$, when $\sigma$ is the line segment $[z_1,z_2]$, and
retain the couple $(z_1,z_2)$ achieving the negative value of $J_{\sigma}^\prime(0)$ with largest modulus.
\end{enumerate}
Note that the step (3) in this program is relatively unexpensive since the involved quantities---and notably the coefficients of $P(y,\cdot,\cdot)$ for $y \in D$---are computed beforehand, once and for all.

\section{Numerical algorithm}\label{sec.num}

\noindent We finally apply the methodology of \cref{sec.practinterest} to two shape optimization problems of the form: 
\begin{equation}\label{eq.sopb}
 \min\limits_\Omega {J(\Omega)}, \text{ s.t. } C(\Omega) = 0.
 \end{equation}
Both examples are addressed using Hadamard's boundary variation method (see again \cite{allaire2004structural,henrot2018shape,pironneau1982optimal,sokolowski1992introduction}). 
We track the motion of the shape $\Omega$ thanks to the level set-based mesh evolution method from \cite{allaire2014shape};
this allows for an explicit, meshed representation of $\Omega$ at each stage of the process, and no ersatz material approximation is needed
to compute shape gradients.
The constrained optimization in \cref{eq.sopb} is treated by the null-space algorithm from \cite{feppon2019null}. 
We enrich this classical framework with the addition of material ligaments to $\Omega$ using the methodology of \cref{sec.practinterest} in two different ways.

\subsection{Adding bars in the course of the shape optimization process}\label{sec.canti}

\noindent In this section, we seek to add bars to the shape in order to enrich its topology in the course of a `classical' shape optimization process driven by the method of Hadamard. 
The physical setting is that of the cantilever test-case, as depicted on \cref{fig.cantiex} (top, left). 
Shapes are contained in a box with size $2\times 1$; they are attached on the left-hand side of their boundary, 
and a unit vertical load $g = (0,-1)$ is applied on a region $\Gamma_N$ at the middle of their right-hand side. 
Omitting body forces for simplicity, we aim to minimize the compliance of the shape $\Omega$ under a volume constraint, i.e. we solve \cref{eq.sopb} with:
$$J(\Omega) = \int_{\Gamma_N}{g \cdot u_\Omega \:ds},\:\: C(\Omega)= \int_\Omega {\:dx} - V_T, \text{ and the volume target } V_T = 0.8.$$ 

Starting from the shape in \cref{fig.cantiex} (top, left), we consider a situation where the parameters of the optimization algorithm 
are tuned so that the volume constraint tends to be satisfied `too fast':
the holes merge permaturely (\cref{fig.cantiex}, (top,right)), and the resulting shape has a trivial topology, with a large value $4.115$ of the compliance (computation not reported).
We consider the same test-case, except that every 10 iterations, from iteration $40$ to $100$, the procedure in \cref{sec.practinterest} is used to graft bars to the structure. 
The resulting shape has a richer topology, for an improved final value of the compliance $2.718$; see \cref{fig.cantiex} for snapshots of the evolution process and \cref{fig.cantiexcv} for the associated convergence histories.
Note that some of the bars created during the topological ligament steps eventually disappear after some iterations.

\begin{figure}[!ht]
\begin{tabular}{cc}
\begin{minipage}{0.5\textwidth}
\hspace{-1.2cm}\includegraphics[width=1.195\textwidth]{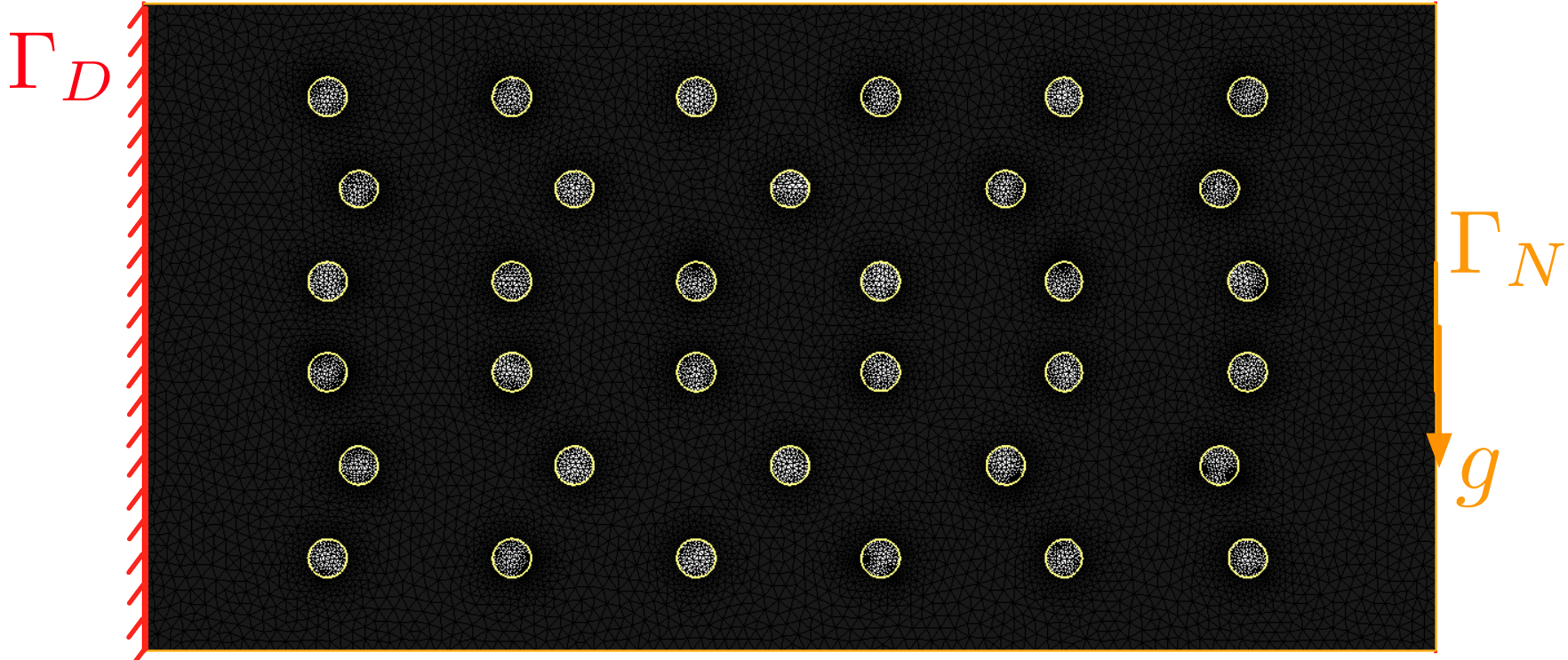}
\end{minipage} & \begin{minipage}{0.5\textwidth}
\includegraphics[width=1.0\textwidth]{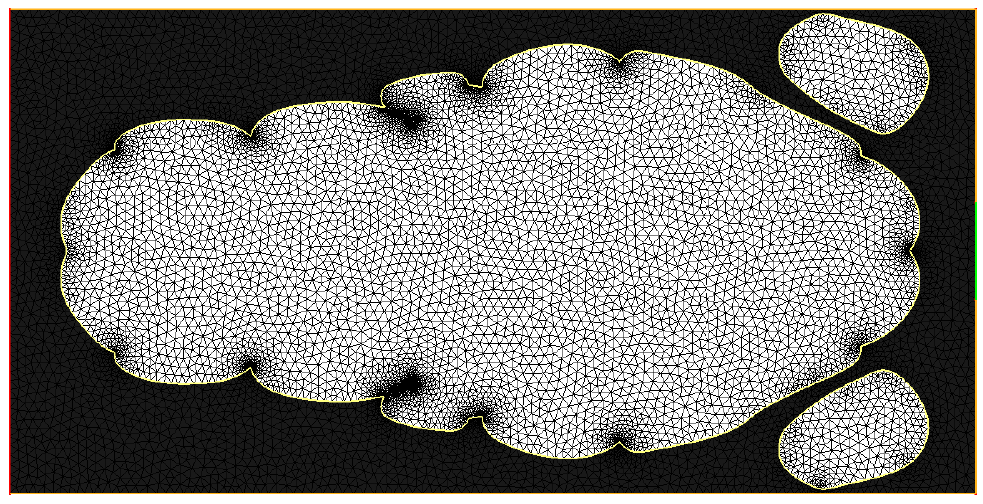}
\end{minipage} \\Ê
\begin{minipage}{0.5\textwidth}
\hspace{-0.4cm}\includegraphics[width=1.0\textwidth]{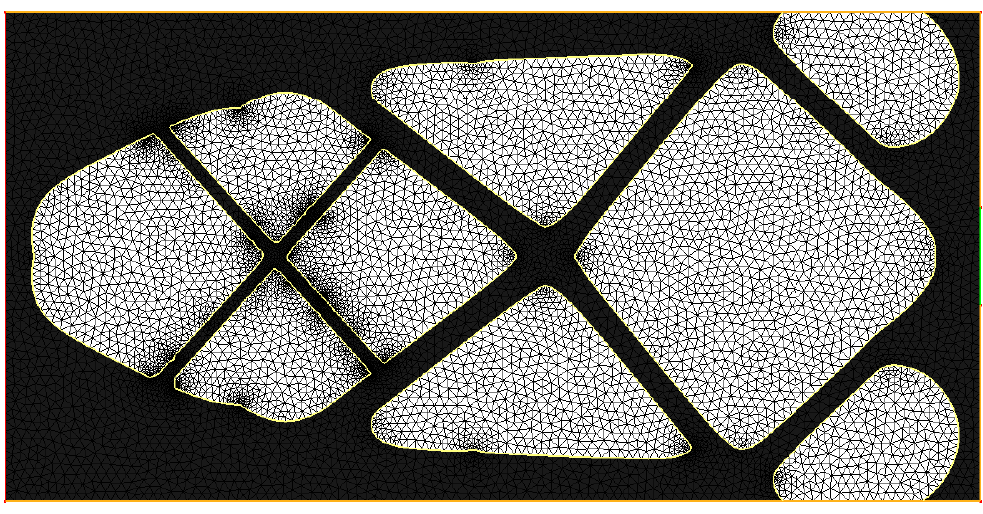}
\end{minipage} & \begin{minipage}{0.5\textwidth}
\includegraphics[width=1.0\textwidth]{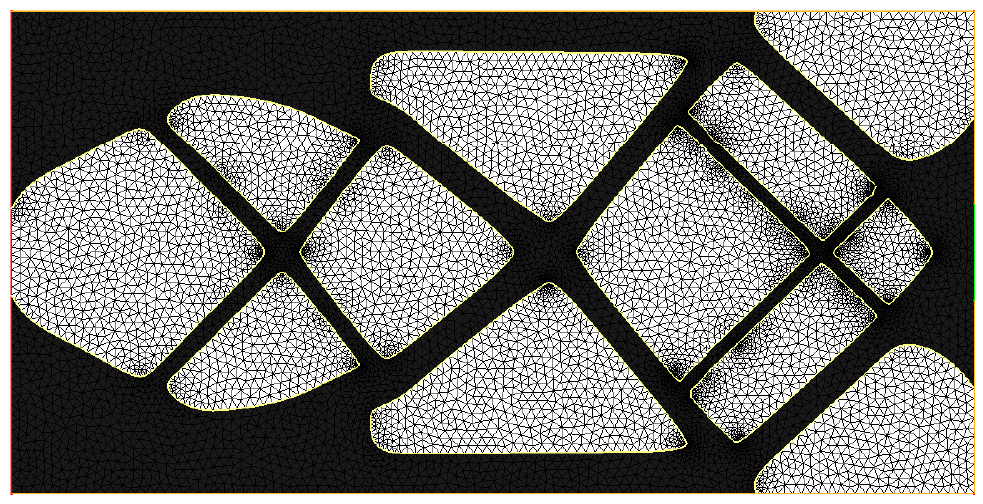}
\end{minipage} \\Ê
\begin{minipage}{0.5\textwidth}
\hspace{-0.4cm}\includegraphics[width=1.0\textwidth]{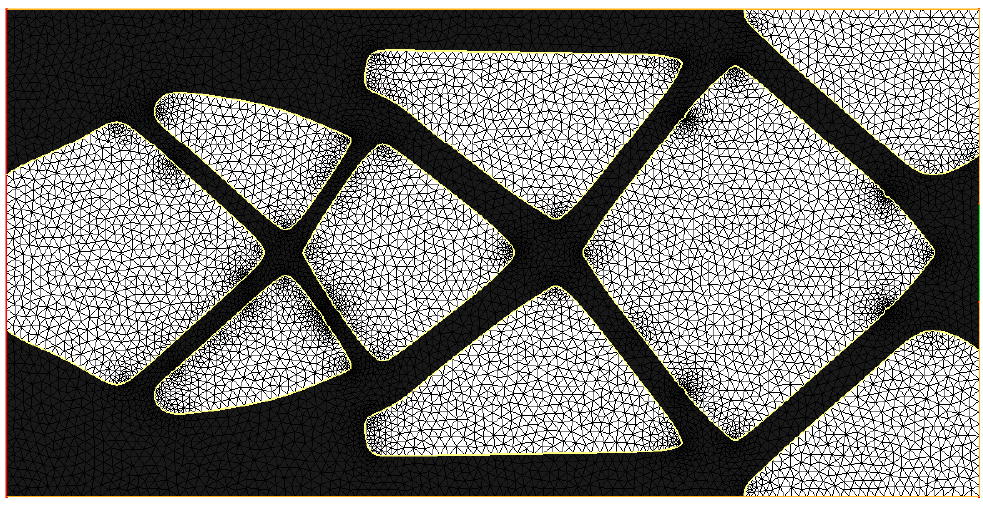}
\end{minipage} & \begin{minipage}{0.5\textwidth}
\includegraphics[width=1.0\textwidth]{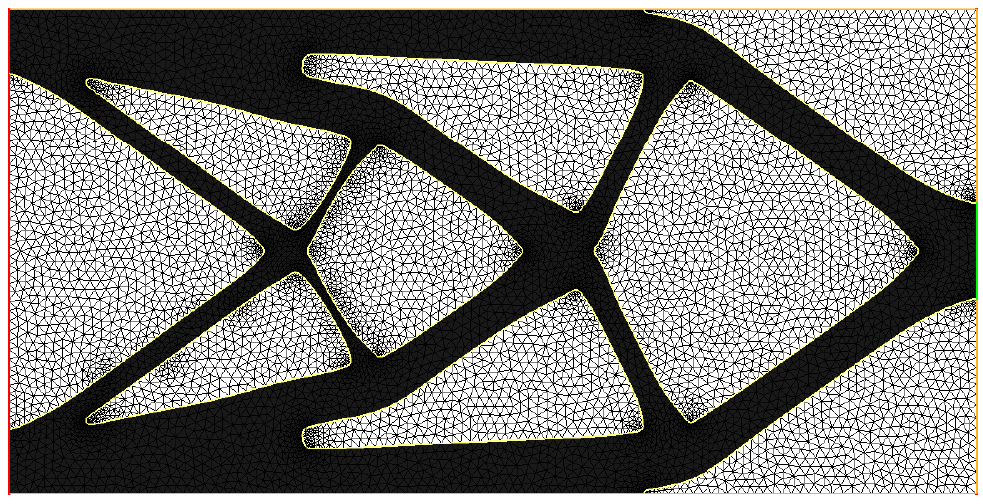}
\end{minipage} \\Ê
\end{tabular}
\caption{\it From left to right, top to bottom: Iterations 0 (with details of the test-case), 40, 51, 61, 85, and 200 of the cantilever test-case of \cref{sec.canti}.}
\label{fig.cantiex}
\end{figure}

\begin{figure}[!ht]
\begin{tabular}{cc}
\begin{minipage}{0.35\textwidth}
\includegraphics[width=1.0\textwidth]{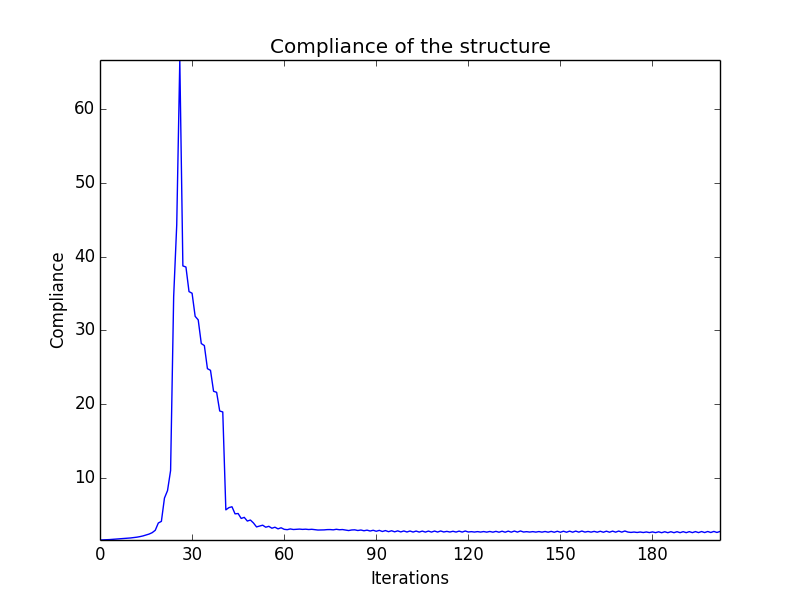}
\end{minipage} & \begin{minipage}{0.35\textwidth}
\includegraphics[width=1.0\textwidth]{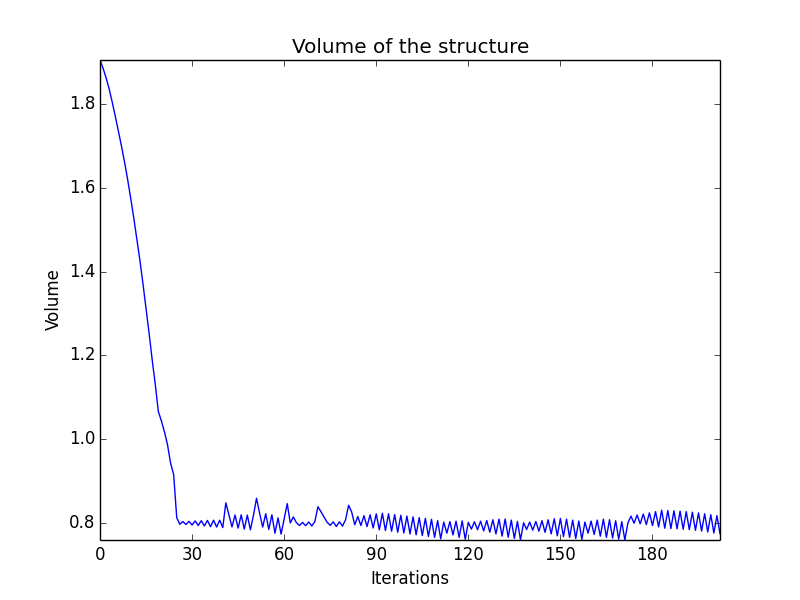}
\end{minipage} 
\end{tabular}
\caption{\it Convergence histories for the compliance (left) and the volume (right) of the shape in the cantilever example of \cref{sec.canti}.}
\label{fig.cantiexcv}
\end{figure}

\subsection{A judicious initialization for truss-like structures}\label{sec.mastex}

\noindent Our strategy can also be used as a preprocessing of 
a traditional shape optimization process, in situations where truss-like structures (i.e. containing lots of bars) are expected. 
In the setting of a T-shaped mast (see \cref{fig.mastex} (top, left)) we consider the minimization \cref{eq.sopb} of the volume under a compliance constraint:
$$ J(\Omega) = \int_\Omega{dx}, \text{ and } C(\Omega) = \int_{\Gamma_N}{g \cdot u_\Omega \:ds} - C_T, \text{ and the target value } C_T = 0.4.$$
Starting from an empty shape, a first stage aims to enrich the structure with bars of the form $[z_1,z_2]$, 
connecting endpoints $z_1,z_2$ sought within a user-defined set of points in $D$ (in red in \cref{fig.mastex} (top, left)), following the methodology of \cref{sec.practinterest}.
The process ends when the compliance of the structure is close to the target $C_T$ (in our case, when it reaches the value $1.25$);
the resulting shape  is that in \cref{fig.mastex} (top, right). 

In a second stage, we use this structure as the initialization for the resolution of \cref{eq.sopb} by means of the Hadamard's boundary variation method; see \cref{fig.mastex} (bottom row) and \cref{fig.mastexcv} for the convergence histories.

\begin{figure}[!ht]
\begin{tabular}{ccc}
\begin{minipage}{0.26\textwidth}
\hspace{-1cm}\includegraphics[width=1.15\textwidth]{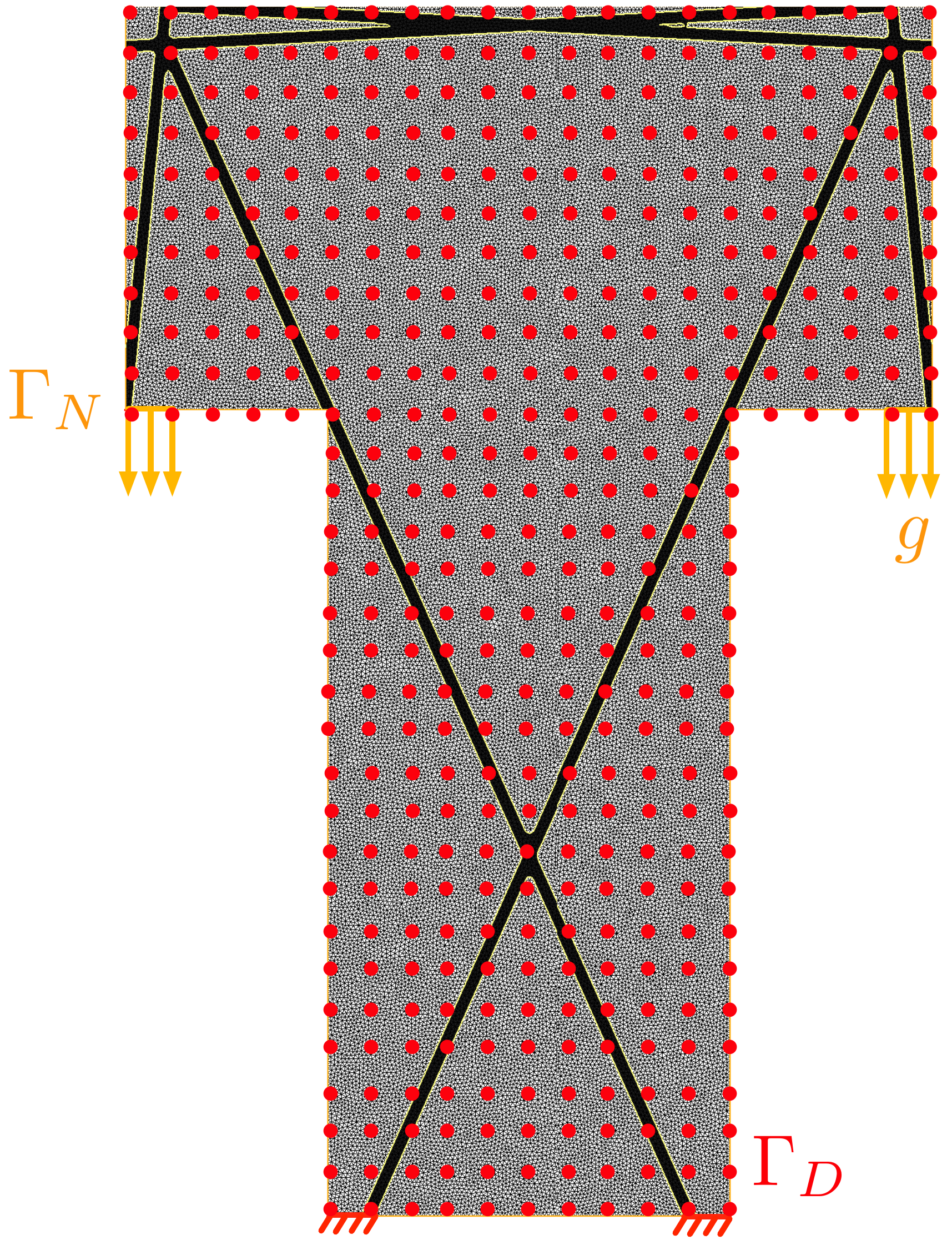}
\end{minipage} &
 \begin{minipage}{0.26\textwidth}
\includegraphics[width=1.0\textwidth]{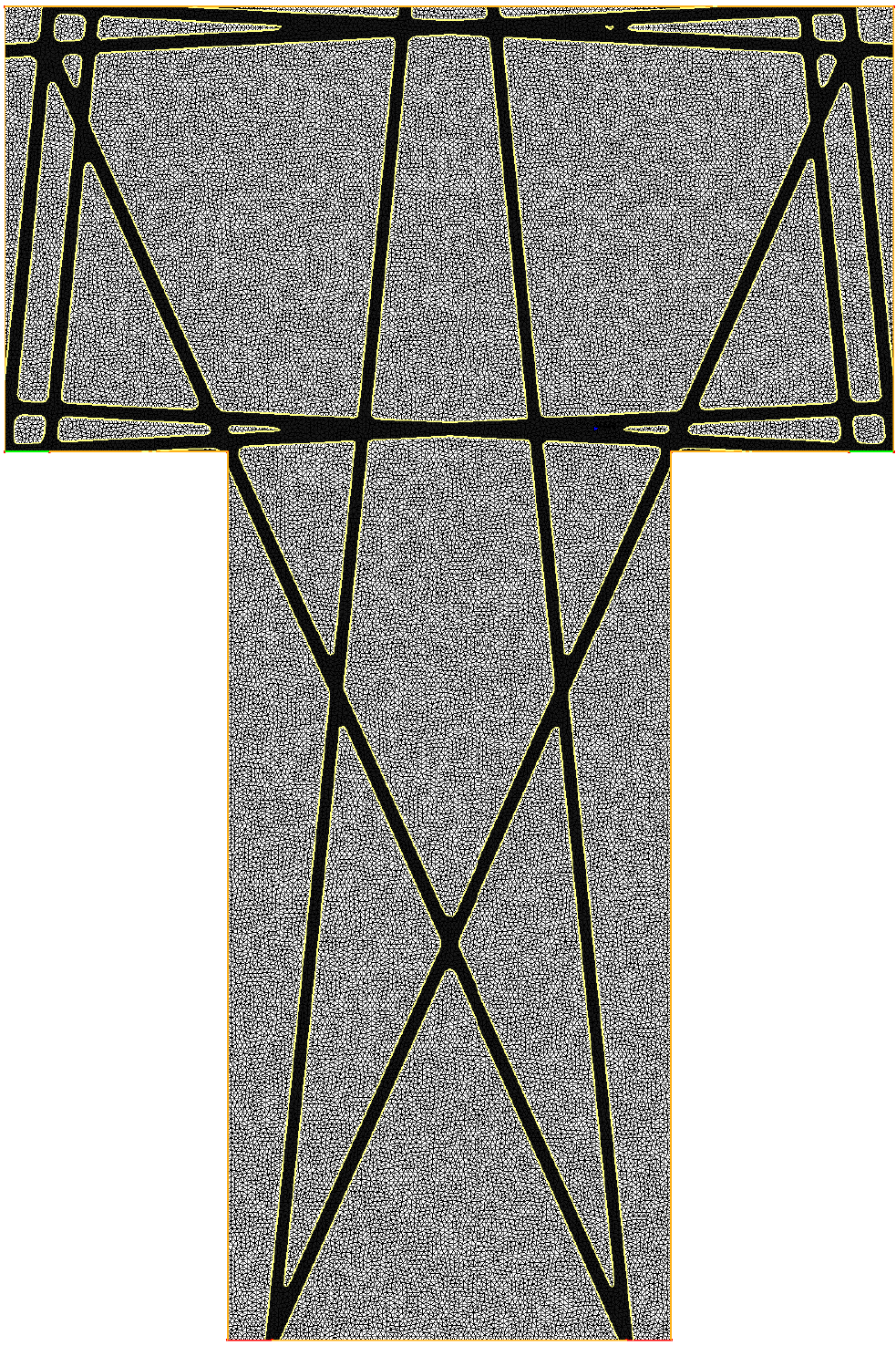}
\end{minipage} &
\begin{minipage}{0.26\textwidth}
\includegraphics[width=1.0\textwidth]{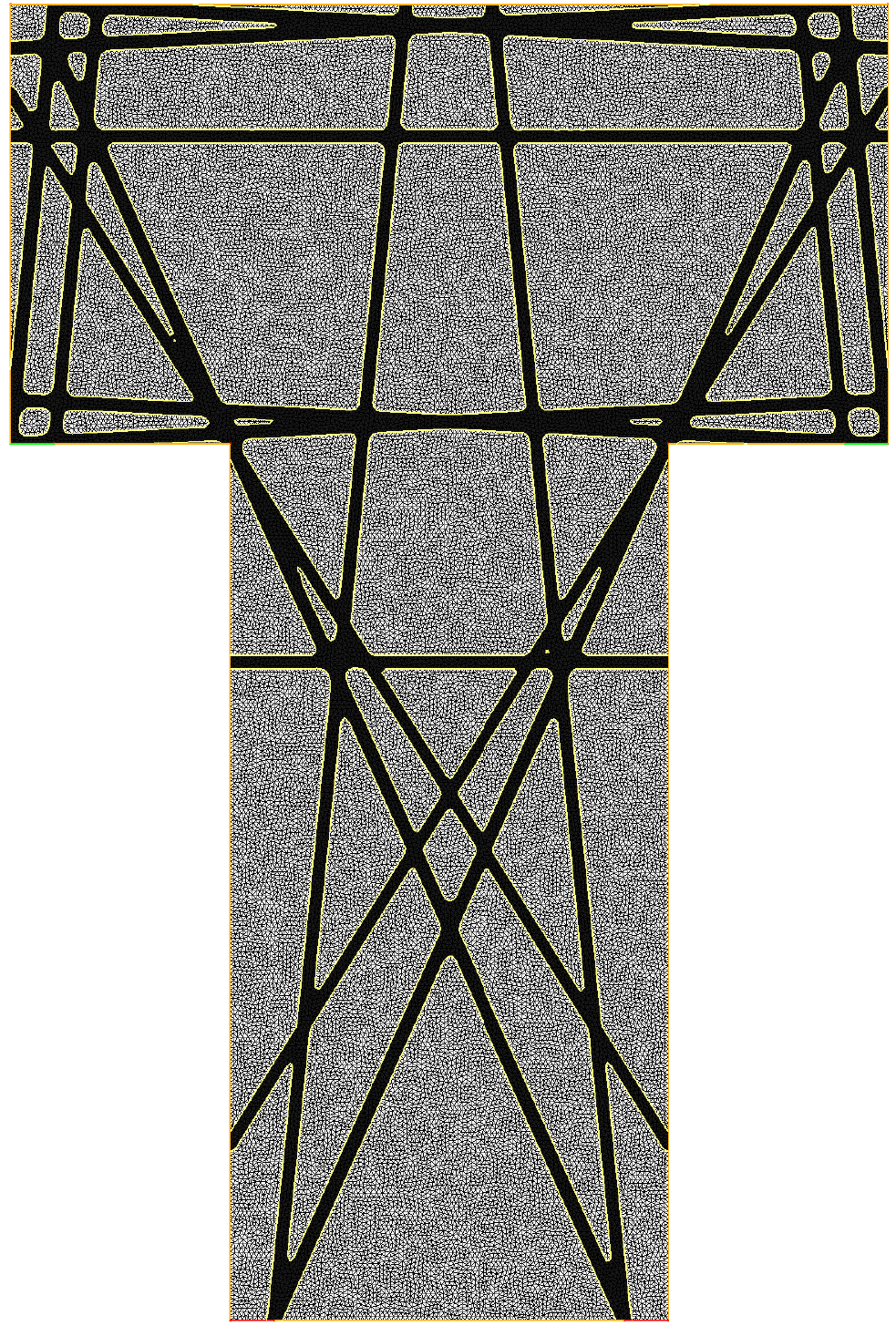}
\end{minipage} \\
\begin{minipage}{0.26\textwidth}
\includegraphics[width=1.0\textwidth]{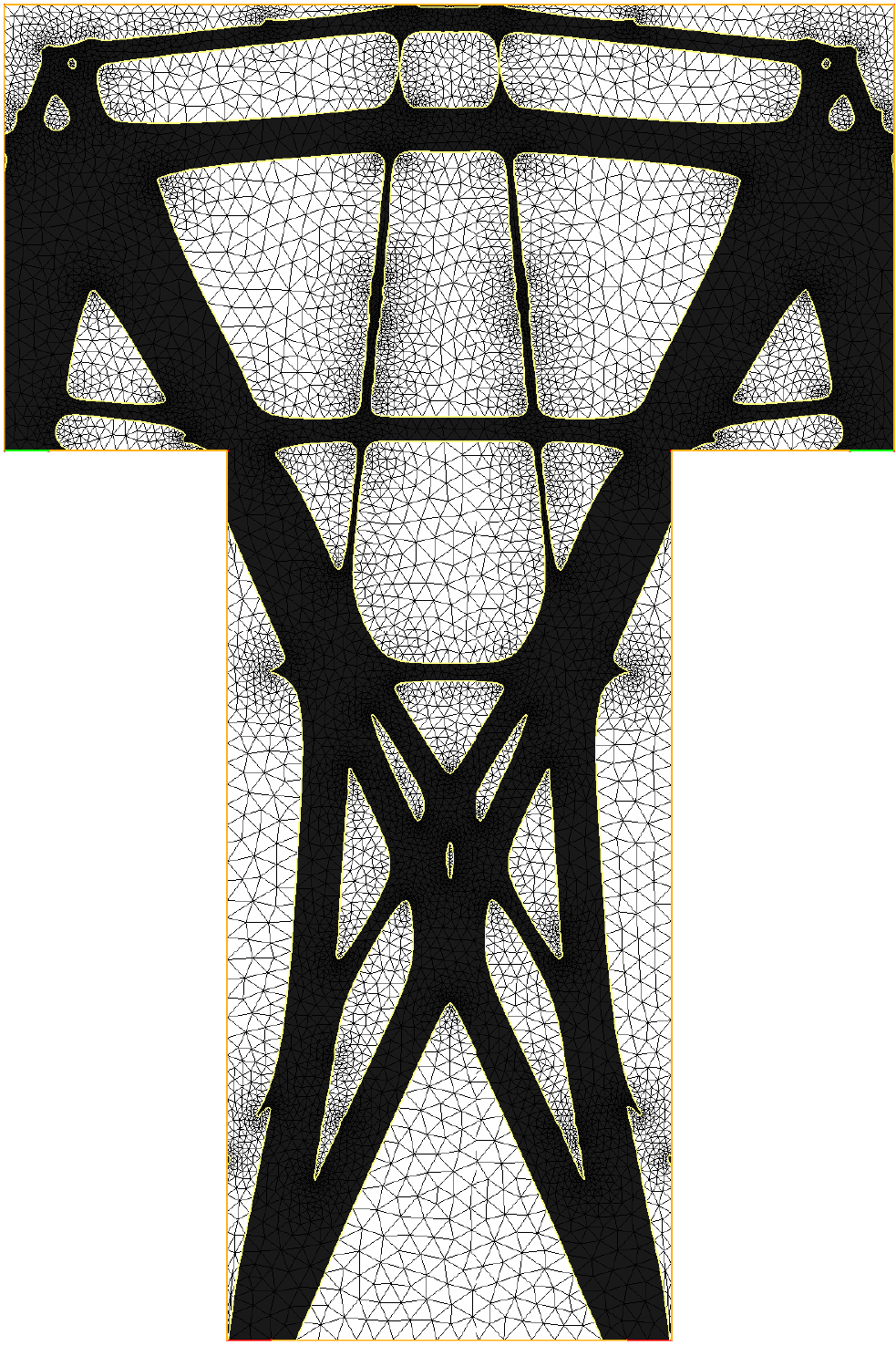}
\end{minipage} &
 \begin{minipage}{0.26\textwidth}
\includegraphics[width=1.0\textwidth]{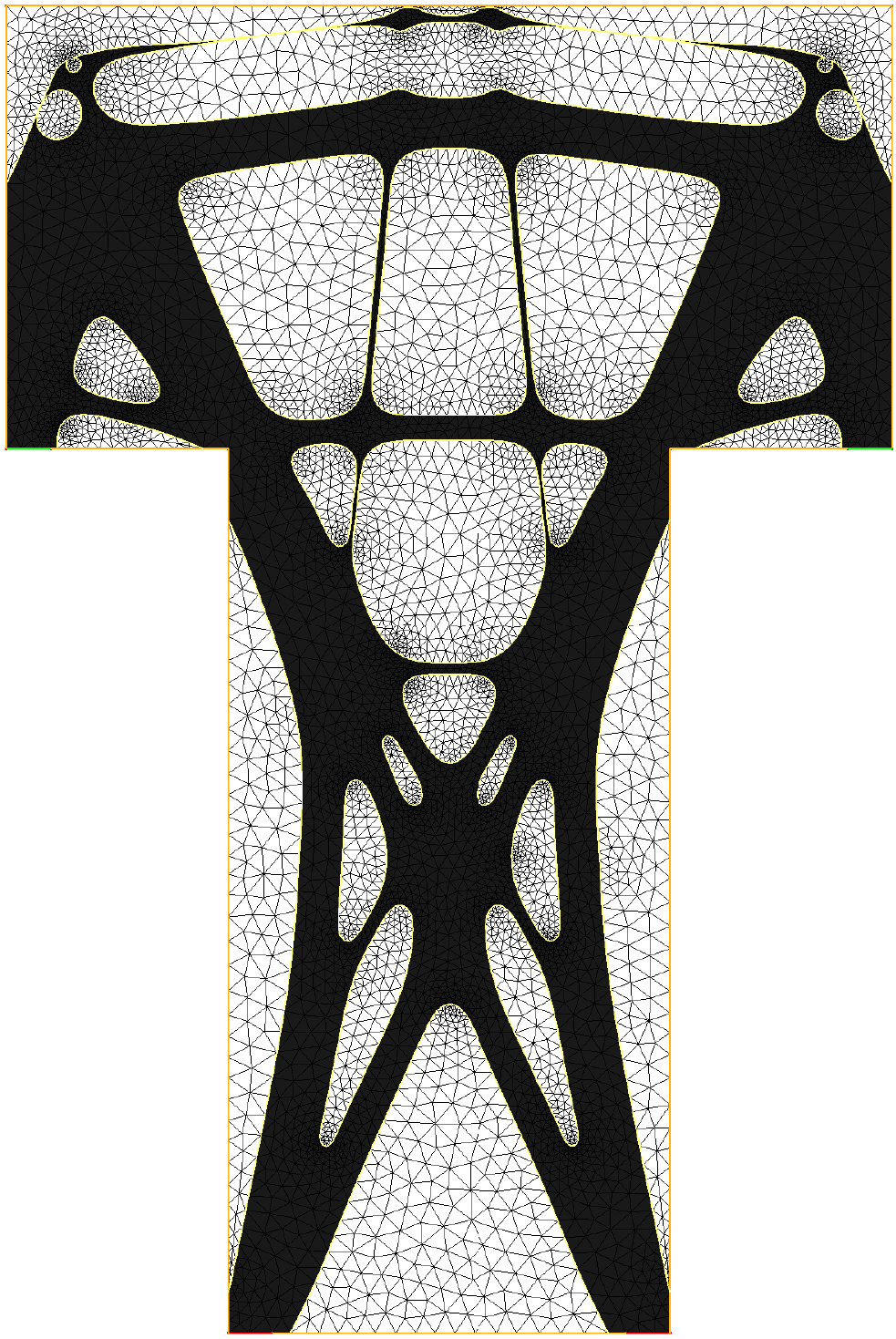}
\end{minipage} &
\begin{minipage}{0.26\textwidth}
\includegraphics[width=1.0\textwidth]{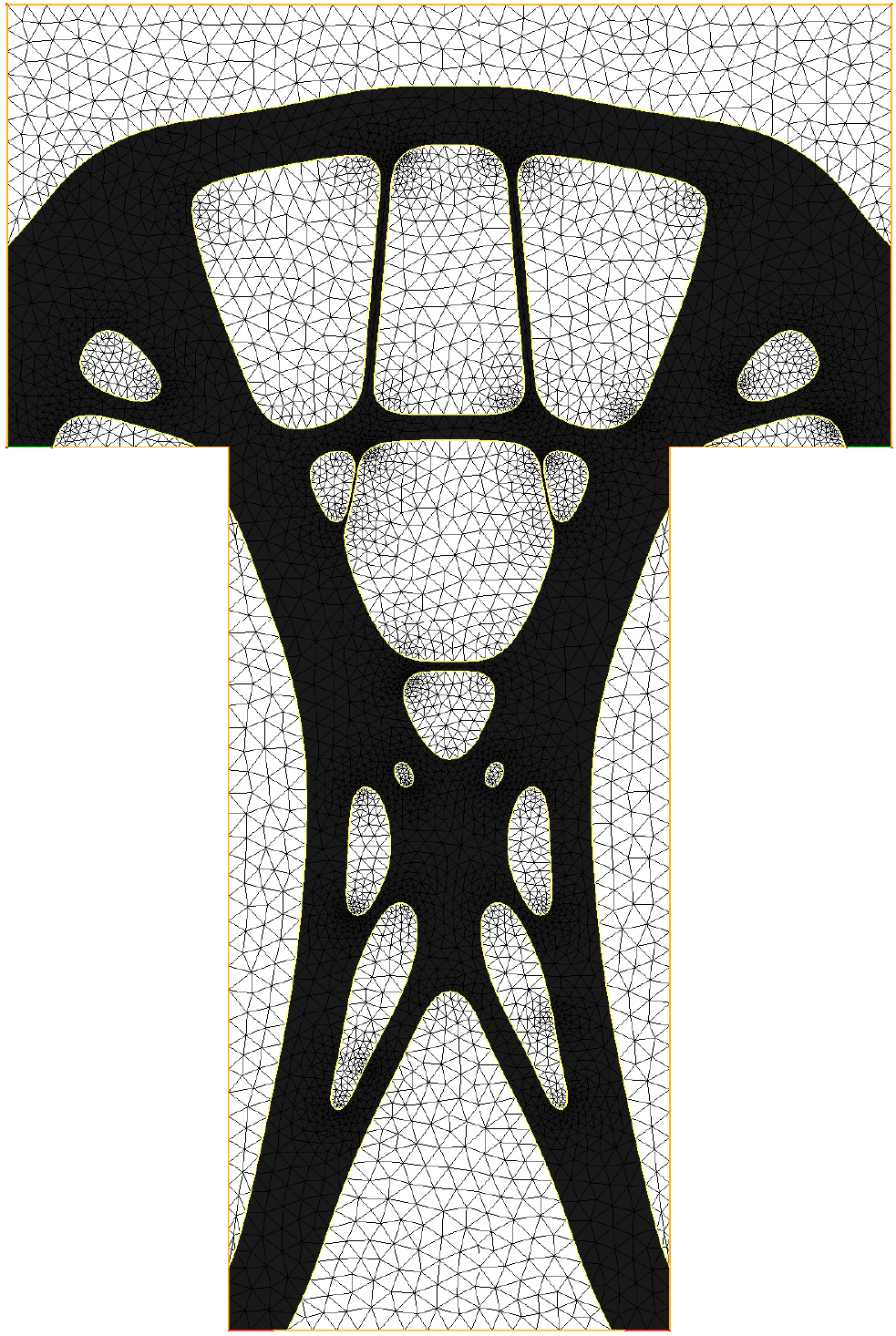}
\end{minipage} \\
\end{tabular}
\caption{\it (Upper row, from left to right): Steps 3 (with details of the test-case), 6, and 9 of the first stage of bar insertion; (lower row, from left to right) steps 10, 60 and 200 of the second stage, in the mast example of \cref{sec.mastex}.}
\label{fig.mastex}
\end{figure}

\begin{figure}[!ht]
\begin{tabular}{ccc}
\begin{minipage}{0.3\textwidth}
\includegraphics[width=1.0\textwidth]{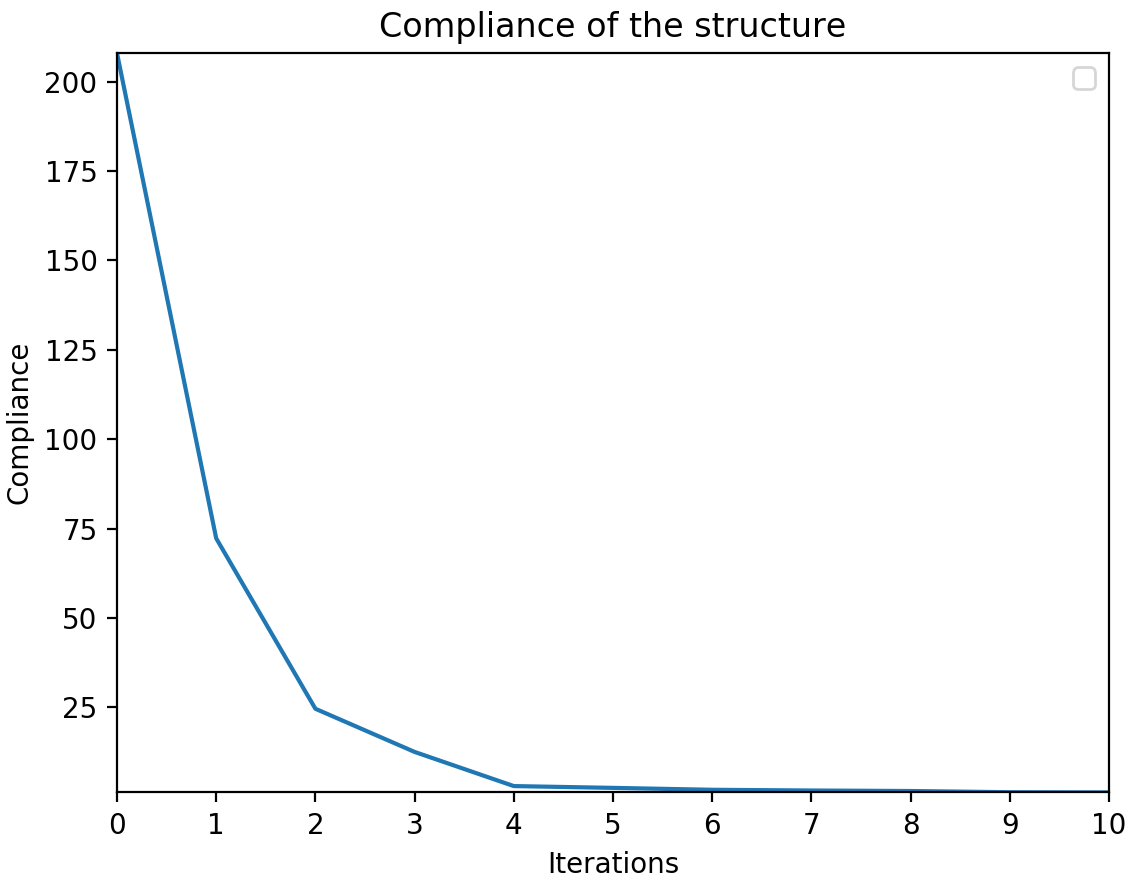}
\end{minipage} & \begin{minipage}{0.3\textwidth}
\includegraphics[width=1.0\textwidth]{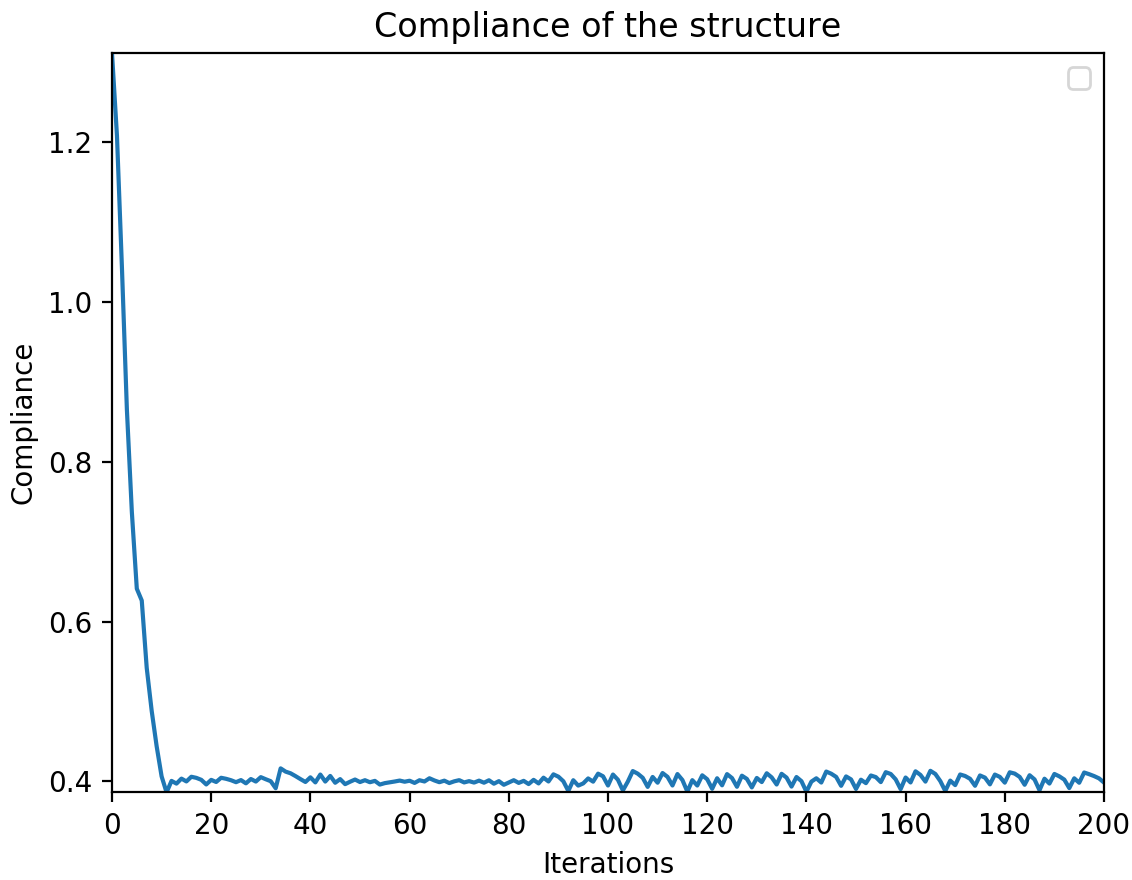}
\end{minipage} & \begin{minipage}{0.3\textwidth}
\includegraphics[width=1.0\textwidth]{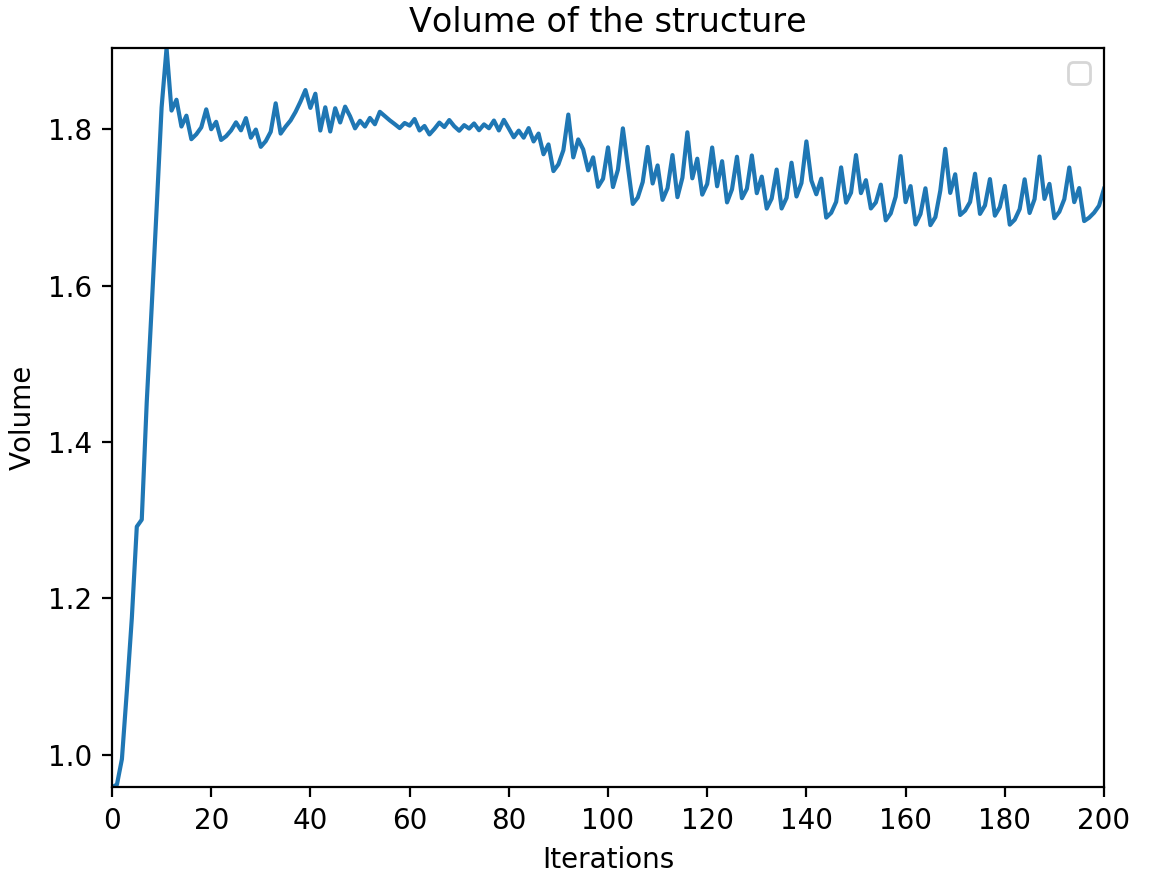}
\end{minipage} 
\end{tabular}
\caption{\it Convergence histories for (left) the compliance in the first stage, (middle) the compliance in the second stage and (right) the volume in the mast example of \cref{sec.mastex}.}
\label{fig.mastexcv}
\end{figure}

\par\medskip
\noindent \textbf{Acknowledgements.} This work was partially supported by the project ANR-18-CE40-0013 SHAPO financed by the French Agence Nationale de la Recherche (ANR).

%
\bibliographystyle{siam}
\bibliography{./genbib}

\end{document}